\documentclass[11pt]{amsart}

\usepackage[english]{babel}
\usepackage[utf8x]{inputenc}
\usepackage[T1]{fontenc}

\usepackage[margin=1.5in]{geometry}

\usepackage{amsmath}
\usepackage{amstext}
\usepackage{amsfonts}
\usepackage{amssymb}
\usepackage{amsthm}
\usepackage{amsrefs}
\usepackage{mathtools}
\usepackage{dsfont}
\usepackage{color}
\usepackage{graphicx}
\usepackage[colorinlistoftodos]{todonotes}
\usepackage[colorlinks=true, allcolors=blue]{hyperref}
\usepackage{float}
\usepackage{mathrsfs}
\allowdisplaybreaks

\newtheorem*{cor*}{Corollary}
\newtheorem*{thm*}{Theorem}
\newtheorem*{lem*}{Lemma}

\newtheorem*{prop*}{Proposition}
\newtheorem*{rmk*}{Remark}

\theoremstyle{definition}

\newtheorem*{defn*}{Definition}

\theoremstyle{remark}

\newcommand{\Rad}{\operatorname{Rad}}

\usepackage[nodisplayskipstretch]{setspace}

\allowdisplaybreaks

\makeatletter
\renewcommand{\@settitle}{%
  \begin{center}%
    \baselineskip14pt\relax
    \bfseries
    \@title
  \end{center}%
}
\makeatother

\title[]{\large Groups with rapid decay and trivial amenable radical are C$^*$-simple}

\usepackage{etoolbox}

\makeatletter
\patchcmd{\@setauthors}
  {\MakeUppercase{\authors}}
  {\authors}
  {}
  {\ClassWarning{amsart}{Could not patch author capitalization}}
\makeatother

\author[C. Barnett]{\normalsize Caleb Barnett}
\address{Caleb Barnett, University of Illinois at Chicago\\ USA}
\email{cbarn@uic.edu}

\author[J. Gondek]{Joseph Gondek}
\address{Joseph Gondek, University of Oxford\\ UK}
\email{joseph.gondek@stx.ox.ac.uk}

\author[M. Kalantar]{Mehrdad Kalantar}
\address{Mehrdad Kalantar\\ University of Oxford\\ UK}
\email{mehrdad.kalantar@maths.ox.ac.uk}

\begin{document}

\maketitle

\let\thefootnote\relax\footnotetext{
For the purpose of open access, the authors have applied a CC-BY license to any author accepted manuscript arising from this submission.}

In \cite{agkp26}*{Question 1}, Alam, Pham, and the second and third named authors asked whether a finitely generated group with rapid decay and trivial amenable radical is C$^*$-simple. Recently, the first author communicated to the second and third authors the existence of a Lean-verified proof of a positive answer to this question, generated by GPT-6 Astra. An online meeting between the authors was arranged, where the first author would present the LLM-generated proof to the second and third authors. The first author began the presentation with an elementary observation: if an element $g\in\Gamma$ lies outside of the amenable radical, then a finite set of conjugates $F\subset \textup{Cl}(g)$ generate a non-amenable subgroup of $\Gamma$. We immediately understood that this basic insight was missed in \cite{agkp26}; it implies exponential co-growth of the centralizer of $g$, which is enough to quickly answer \cite{agkp26}*{Question 1}. The three authors agreed to end the presentation at that point, and write this note, for which no LLM was used: the second and third authors have not read or verified the contents of the LLM-generated proof. The first author intends to post the LLM-generated proof in a separate, independent note.

\medskip

Let $B_n$ denote the ball of radius $n$ centered at the neutral element. Following the proof of \cite[Theorem A]{agkp26}, the observation mentioned above implies:

\begin{lem*} 
Let $F\subset \Gamma\backslash \textup{Rad}(\Gamma)$ be finite. There are subsets $A_n\subset B_n$ of \hbox{exponential} growth such that the maps $
A_n\ni a\mapsto aga^{-1}$
are injective, for all $g\in F$.
\end{lem*}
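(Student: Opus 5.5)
The plan is to first establish exponential co-growth of each centralizer $C(g)$, $g\in F$, and then feed this into the combinatorial selection argument from the proof of \cite[Theorem A]{agkp26}. That argument produces the sets $A_n$ once exponential co-growth is known. Note that $aga^{-1}=bgb^{-1}$ iff $a^{-1}b\in C(g)$, so the lemma asks for $A_n\subset B_n$ of exponential size whose elements lie in pairwise distinct left cosets of $C(g)$, simultaneously for every $g\in F$.

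\emph{Step 1 (non-amenable subgroup from conjugates).} Fix $g\in F$. Since $g\notin \Rad(\Gamma)$, the normal subgroup $N_g$ generated by $\textup{Cl}(g)$ is not amenable. Indeed, if it were amenable it would be an amenable normal subgroup, hence contained in $\Rad(\Gamma)$, contradicting $g\in N_g$. Amenability is local: a group is amenable iff all its finitely generated subgroups are. Hence there are finitely many conjugates $c_i=h_igh_i^{-1}$, $1\le i\le k$, with $H_g=\langle c_1,\dots,c_k\rangle$ non-amenable. This is the observation quoted in the introduction.

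\emph{Step 2 (exponential co-growth of $C(g)$).} Here I would argue by contradiction. Suppose $|\{aC(g): a\in B_n\}|$ grows subexponentially. The Schreier graph of $\Gamma$ acting on $\Gamma/C(g)\cong \textup{Cl}(g)$ then has subexponential ball growth, so balls along a subsequence are Følner sets. This yields a $\Gamma$-invariant mean $m$ on $\textup{Cl}(g)$.

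Next I would transport amenability to $H_g$ using the specific structure: each generator $c_i$ is itself a point of $\textup{Cl}(g)$, and $C(c_i)=h_iC(g)h_i^{-1}$. The aim is an $H_g$-invariant mean on $H_g$, or on $H_g/(\text{amenable})$, by pulling $m$ back through orbit maps $x\mapsto$ (stabiliser data). This transfer is the step I expect to be the main obstacle, since an invariant mean on a homogeneous space only gives amenability of the group relative to the stabiliser.

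The first thing I would try is to follow the proof of \cite[Theorem A]{agkp26} as the paper indicates. There, exponential co-growth is derived from a non-amenable subgroup generated by finitely many conjugates. Equivalently, one shows that $H_g$ acts on $\textup{Cl}(g)$ with orbits $H_g\cdot c_i$ of exponential growth in the word length of $H_g$. Since $H_g$ is non-amenable, it has exponential growth, and one controls the stabilisers $H_g\cap C(c_i)$ through the finitely many generators. The lengths in $H_g$ are comparable to those in $\Gamma$ up to a constant factor, which keeps the growth exponential in $\Gamma$.

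\emph{Step 3 (simultaneous injectivity).} With $\lambda>1$ and $|\{aC(g):a\in B_n\}|\ge \lambda^n$ for each $g\in F$, I would construct $A_n$ greedily inside $B_n$. One repeatedly picks $a$ and discards all $b\in B_n$ with $a^{-1}b\in\bigcup_{g\in F}C(g)$. The issue is that a single pick may discard many elements, so instead I would use the version in \cite{agkp26} that selects along a product structure. Concretely, I would take $A_n$ to be products $a_1a_2\cdots a_r$ of elements chosen successively, for the different $g\in F$ in turn, each from a coset transversal of $C(g)$ of exponential size. Lengths add, so $A_n\subset B_{|F|n}$, and rescaling $n$ keeps exponential growth. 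The resulting maps $a\mapsto aga^{-1}$ are injective for all $g\in F$ at once. Verifying that this product construction preserves injectivity across different $g$ is a routine but fiddly check, which I would model on \cite{agkp26}.
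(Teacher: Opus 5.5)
Your Step 2, exponential co-growth of $C(g)$, is the real content of the lemma, and you leave it unproved. You flag the transfer from an invariant mean as the main obstacle. Your fallback, ``one controls the stabilisers $H_g\cap C(c_i)$ through the finitely many generators'', is not an argument. Exponential growth of $H_g$ says nothing about a single orbit $H_g\cdot c_i\cong H_g/(H_g\cap C(c_i))$, since that stabiliser can be large.

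The missing idea is to use all the orbits at once. Because the $c_i$ generate $H_g$, the common stabiliser $H_g\cap\bigcap_i C(c_i)$ is exactly the centre $Z(H_g)$. This is abelian, so $H_g/Z(H_g)$ is still non-amenable and, being finitely generated, has exponential growth. The map $hZ(H_g)\mapsto(hc_1h^{-1},\dots,hc_kh^{-1})$ is well defined and injective. Hence the number of $Z(H_g)$-cosets meeting $B_n(H_g)$ is at most $\prod_i|\{hc_ih^{-1}:h\in B_n(H_g)\}|$. Pigeonhole then gives some $i$ with exponentially many conjugates $hc_ih^{-1}=(hh_i)g(hh_i)^{-1}$, where $|hh_i|_\Gamma\le n\max_j|c_j|_\Gamma+|h_i|_\Gamma$.

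The same observation also closes your contrapositive route, but only if you argue with growth rather than invariant means. Subexponential co-growth passes to the conjugates $C(c_i)$ and then to $\bigcap_i C(c_i)$, because cosets of the intersection inject into the product of the coset spaces. A bare invariant mean on $\Gamma/C(g)$ does not obviously pass to this intersection.

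Your Step 3 is also not carried out, and the construction you sketch fails as stated. Suppose you take all products $a_1a_2$ with $a_1$ from a $C(g_1)$-transversal and $a_2$ from a $C(g_2)$-transversal. Then $a_1a_2$ and $a_1a_2'$ have the same $g_1$-conjugate whenever $a_2^{-1}a_2'\in C(g_1)$, and a $C(g_2)$-transversal does not exclude this. If you keep only some products, the selection rule is exactly what is missing.

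No argument that only counts cosets can work either. On $S\sqcup T$, the partitions $\{S\}\cup\{\{t\}:t\in T\}$ and $\{T\}\cup\{\{s\}:s\in S\}$ both have many classes. Yet no set of more than two points meets each class of both partitions at most once. So the left-invariance of the coset partitions must be used.

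The paper does this by induction on $|F|$. Suppose conjugation by each of $g_1,\dots,g_{m-1}$ is injective on $A_n$. Either $A_n$ meets exponentially many $C(g_m)$-cosets, and you keep one point from each, or some fibre $A_n'\subset x_nC(g_m)$ is exponentially large. In the latter case, take translates $b_ix_n^{-1}A_n'$ with $b_i\in B_n$ representing distinct $C(g_m)$-cosets. These lie in the distinct cosets $b_iC(g_m)$, and conjugation by $g_1,\dots,g_{m-1}$ remains injective on each. Now choose one point from each translate greedily. Each earlier choice excludes at most one point of the current translate for each $k<m$. This yields exponentially many points in $B_{3n}$.
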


\begin{proof}
We proceed by induction. If $g\in \Gamma\backslash \textup{Rad}(\Gamma)$, then find elements $t_1, ..., t_m\in \Gamma$ such that the subgroup $H = \langle t_kgt_k^{-1}: 1\leq k \leq m\rangle\leq\Gamma$ is nonamenable. Let $B_k(H)$ denote the ball of radius $k$ in $H$, with respect to this generating set, and consider the well-defined injective map
\begin{align*}
\mathcal{Q}: H/Z(H)\ni hZ(H)\mapsto (ht_1gt_1^{-1}h^{-1}, ..., ht_mgt_m^{-1}h^{-1}).
\end{align*}
Because $H/Z(H)$ is nonamenable, it has exponential growth, and therefore $\mathcal{Q}(B_k(H))\subset \prod_{k=1}^m H$ has exponential growth. It follows (from e.g., the pigeonhole principle), that there is $1\leq k_0\leq m$ such that the subset $\{st_{k_0}gt_{k_0}^{-1}s^{-1}: s\in B_n(H)\}$ has exponential growth in $H$, and therefore $\{tgt^{-1}: t\in B_n\}$ has exponential growth in $\Gamma$. This proves the base case $|F| = 1$.

Now suppose the theorem is proven for subsets $F\subset \Gamma$ of cardinality $|F| < m$, and consider $m$
distinct elements $g_1, ..., g_m\in \Gamma\backslash \textup{Rad}(\Gamma).$ Let $A_n\subset B_n$ be a sequence of subsets with exponential growth such that the maps
$A_n\ni a\mapsto ag_ka^{-1}$
are injective, for all $k < m$. Consider the number of fibers $\iota(n)$ for the maps $\pi_n: A_n\ni a\mapsto aC_\Gamma(g_m)\in \Gamma/C_{\Gamma}(g_m)$. 
If there is a strictly increasing sequence $(n_k)_{k\in\mathbb{N}}$ with $\iota(n_k) \sim \alpha^{n_k}$ for some $\alpha > 1$, we are done. 
If not, then for each $n$ there is a fiber, say $A_n\supset A_n' = \pi_n^{-1}(\{x_n\})$ for $x_n\in A_n$, of exponential growth. We will use $A_n'$ to construct a new subset $A_n''\subset B_{3n}$ with the property that the maps $A_n''\ni a\mapsto aC_\Gamma(g_k)$ are injective, for every $k = 1, ..., m$, and this will complete the proof. We will do this with a diagonalization argument. Let $f(n) = \min\{|A_n'|, |q_m(B_n)|\}$ (where $q_m: \Gamma\to \Gamma/C_\Gamma(g_m)$ is the coset map), and enumerate $A_n' = \{a_1, ..., a_{P(n)}\}$, $q_m(B_n) = \{b_1, ..., b_{Q(n)}\}$. The functions $P(n)$, $Q(n)$, and $f(n)$ have exponential growth.
Find $N\in\mathbb{N}$ with $f(n)> 2m$ for all $n > N$, and now fix such $n$. Consider the $Q(n)$ subsets $b_1x_n^{-1}A'_n, b_2x_n^{-1}A'_n, ..., b_{Q(n)}x_{n}^{-1}A'_n$. We construct $A_n''$ inductively: suppose that $\ell-1$ elements $y_i\in b_ix_n^{-1}A_n'$ ($i <\ell < f(n)/2m$) have been chosen with distinct $C_{\Gamma}(g_k)$-cosets for all $k = 1, ..., m$. We choose the $\ell$th element of $A_n''$ by removing all elements from $b_{\ell}x_{n}^{-1}A_n''$ which share a $C_{\Gamma}(g_k)$-coset with some $y_i$, for $i < \ell$ and $k < m$ (there are at most $\ell m$ such elements), and choosing an element from the remaining set (such an element exists by the choice of $N$).
Because $n > N$, this process may be continued $f(n)/2m$ times, so that $A_n''$ is a set of cardinality $f(n)/2m$ 
for which all $m$ conjugation maps are injective. This completes step $|F| = m$.
\end{proof}
\begin{thm*}\label{mainthm}
If $\Gamma$ has \textup{RD} and $\textup{Rad}(\Gamma) = \{e\}$, then $\Gamma$ is $C^*$-simple.
\end{thm*}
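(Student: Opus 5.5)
We will verify Powers' averaging property, i.e.\ that the trace $\tau$ is the unique tracial-type limit of averages: by the standard criterion (Haagerup / Kennedy–Breuillard–Kalantar–Kennedy–Ozawa), $\Gamma$ is C$^*$-simple as soon as for every $x\in C^*_r(\Gamma)$ the closed convex hull of $\{\lambda_s x\lambda_s^*: s\in\Gamma\}$ contains $\tau(x)1$. By density and linearity it suffices to treat $x=\sum_{g\in F} c_g\lambda_g$ with $F\subset \Gamma\setminus\{e\}$ finite. Since $\textup{Rad}(\Gamma)=\{e\}$, we have $F\subset\Gamma\backslash\textup{Rad}(\Gamma)$, so the Lemma applies and gives subsets $A_n\subset B_n$ with $|A_n|\geq C\alpha^n$ for some $\alpha>1$, on which every conjugation map $a\mapsto aga^{-1}$, $g\in F$, is injective.

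The key estimate is then the following. Put
\[
y_n=\frac{1}{|A_n|}\sum_{a\in A_n}\lambda_a x\lambda_a^{*}=\sum_{g\in F} c_g\,\frac{1}{|A_n|}\sum_{a\in A_n}\lambda_{aga^{-1}}.
\]
For each fixed $g\in F$, the element $\xi_{g,n}=\frac{1}{|A_n|}\sum_{a\in A_n}\delta_{aga^{-1}}$ is, by injectivity, a normalized sum of $|A_n|$ distinct group elements, each of word length at most $2n+|g|$. Hence $\|\xi_{g,n}\|_2=|A_n|^{-1/2}$ and its support lies in $B_{2n+|g|}$. Rapid decay gives a polynomial $P$ with $\|\lambda(\xi)\|\le P(k)\|\xi\|_2$ for $\xi$ supported in $B_k$. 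Therefore
\[
\|y_n\|\le \sum_{g\in F}|c_g|\,P(2n+|g|)\,|A_n|^{-1/2}\le C'\,P(2n+L)\,\alpha^{-n/2}\longrightarrow 0 .
\]
Since $\tau(x)=0$, this shows $\tau(x)1=0$ lies in the norm-closed convex hull of the conjugates of $x$. For general $x$ with nonzero trace, apply the argument to $x-\tau(x)1$; a standard $\varepsilon/3$ argument then passes from the finitely supported case to arbitrary $x\in C^*_r(\Gamma)$. Finally, if $I$ is a nonzero ideal, pick a positive $x\in I$ with $\tau(x)>0$. The averaging shows $\tau(x)1\in I$, so $I=C^*_r(\Gamma)$.

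The main obstacle is ensuring that the polynomial-versus-exponential competition actually wins. This requires a genuinely exponential lower bound $|A_n|\ge C\alpha^n$ along all $n$, or at least along a subsequence, which suffices here. It also requires that the same set $A_n$ works simultaneously for all $g\in F$, which is exactly what the Lemma provides. One should double-check that ``exponential growth'' in the Lemma means this lower bound; if it only holds along a subsequence $n_k$, we run the estimate along $n_k$ instead.
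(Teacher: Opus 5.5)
Your argument is correct, but it takes a different route from the paper's.

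The paper works on the Furstenberg boundary. It shows that every $g\notin\Rad(\Gamma)$ acts on $\partial_F\Gamma$ without fixed points. If $g$ had fixed points, then (fixed-point sets being clopen) finitely many conjugates $F\subset\textup{Cl}(g)$ would have fixed-point sets covering $\partial_F\Gamma$. The paper then takes $\mu$ uniform on $A_N$ and a $\mu$-stationary $\nu$, and uses the estimate from \cite{agkp26} that $\nu(\textup{Fix}(h))\le \frac{1}{|A_N|}\|\sum_{a\in A_N}\lambda_{aha^{-1}}\|$. RD makes this smaller than $1/|F|$, which contradicts the cover. C$^*$-simplicity then follows from freeness of the boundary action (Kalantar--Kennedy).

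The analytic core is the same as yours: the RD bound $\rho(2N+c)|A_N|^{-1/2}$ on precisely the Powers average of $\lambda_h$ over the Lemma's sets $A_N$. You use that bound directly to kill the trace-zero part of $x$. This gives a self-contained proof that needs no boundary theory, stationary measures, or boundary characterization of C$^*$-simplicity. It uses only the classical implication from Powers averaging to simplicity, which you verify yourself at the end, so citing Haagerup/Kennedy for the equivalence is unnecessary. You also get explicit averaging sets with a quantitative rate.

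The paper's route, in exchange, proves a stronger intermediate statement: elements outside $\Rad(\Gamma)$ act freely on $\partial_F\Gamma$ for every RD group, even when the radical is nontrivial. It also plugs directly into the stationary-measure framework of \cite{agkp26}.
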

\begin{proof}
We show that if $g\not\in \Rad(\Gamma)$, then for the Furstenberg boundary action $\Gamma\curvearrowright \partial_F\Gamma$, we have $\text{Fix}(g) = \emptyset$. If not, then there is a finite subset $F\subset \textup{Cl}(g)$ such that $\bigcup_{h\in F}\textup{Fix}(h) = \partial_F\Gamma$. Let $(A_n)_{n\in\mathbb{N}}$ satisfy the hypotheses of the Lemma for the set $F$, let $c = \max\limits_{h\in F}|h|$, find $N\in\mathbb{N}$ with $\rho(2N+c)/|A_N|^{1/2} < 1/|F|$. Let $\mu\in \textup{Prob}(\Gamma)$ be the uniform measure on $A_N$, and let $\nu\in \textup{Prob}(\partial_F\Gamma)$ be a $\mu$-stationary measure. Arguing as in the proof of \cite{agkp26}*{Theorem A}, we have
\begin{align*}
\nu(\textup{Fix}(h)) \leq \frac{1}{|A_N|}\|\sum_{a\in A_N}\lambda_{aha^{-1}}\|\leq \frac{\rho(2N + c)}{|A_N|}\|\sum_{a\in A_N}\delta_{aha^{-1}}\|_2 = \frac{\rho(2N + c)}{|A_N|^{1/2}} < \frac{1}{|F|},
\end{align*}
so $1 = \nu(X)\leq \sum_{h\in F}\nu(\textup{Fix}(h)) < \sum_{h\in F}|F|^{-1} = 1$, an impossibility.
\end{proof}

\begin{rmk*}
The above holds also for groups with \textup{SRD} in the sense of \cite{agkp26}.
\end{rmk*}

\nocite{*}
\bibliographystyle{plain}
\bibliography{bgk}

\end{document}